\newtheorem{lemma}{Lemma}
\newtheorem{theorem}{Theorem}
\newcommand{\prob}{{\mathbb P}}
\begin{document}

\title{Maximal Steiner Trees in the Stochastic Mean-Field Model of Distance}
\author{A. Davidson%
\thanks{email: \texttt{angus.davidson@bristol.ac.uk}} and A. Ganesh%
\thanks{email: \texttt{a.ganesh@bristol.ac.uk}}}
\affil{School of Mathematics, University of
Bristol, University Walk, Bristol BS8 1TW}

\setcitestyle{numbers}

\maketitle

\begin{abstract}
Consider the complete graph on $n$ vertices, with edge weights drawn independently from the exponential distribution with unit mean. Janson showed that the typical distance between two vertices scales as $\log{n}/n$, whereas the diameter (maximum distance between any two vertices) scales as $3\log{n}/n$. Bollob\'{a}s et al. showed that, for any fixed k, the weight of the Steiner tree connecting $k$ typical vertices scales as $(k-1)\log{n}/n$, which recovers Janson's result for $k=2$. We extend this result to show that the worst case $k$-Steiner tree, over all choices of $k$ vertices, has weight scaling as $(2k-1)\log{n}/n$ and finally, we generalise this result to Steiner trees with a mixture of typical and worst case vertices.
\end{abstract}

\section{Introduction} \label{sec:intro}

Consider the complete graph on $n$ nodes, and assign weights to the edges, drawn independently from 
an exponential distribution with unit mean. This model (or equivalents in which the edge weights are i.i.d. 
with a distribution that has non-vanishing derivative at the origin), is known as the stochastic mean field 
model of distance. A number of combinatorial optimisation problems have been studied on this model. 
For example, Frieze~\citep{frieze1985value} showed that the minimal spanning tree has weight converging 
to $\zeta(3) = \sum_{n=1}^{\infty} n^{-3}$ in probability as $n$ tends to infinity, while van der Hofstad, 
Hooghiemstra and van Mieghem~\citep{van2006size} showed that the broadcast tree (tree of shortest paths 
to all nodes from a specified root) has weight converging to $\zeta(2)$ in probability. For the travelling 
salesman problem, it was shown by Frieze~\citep{frieze2004random} that the length of the optimal tour lies 
between $\zeta(3)$ and $6$ with high probability, and by W\"astlund~\citep{wastlund2010mean} that it 
converges in probability to a constant, for which an explicit expression is given in terms of an integral.
Bhamidi, van der Hofstad and Hooghiemstra \citep{bhamidi2011first} studied first passage percolation 
on the mean-field model, as well as on the Erd{\H{o}}s--R{\'e}nyi random graph with random edge weights. 

We now introduce the notation we will use in the rest of the paper, and state our problem more formally.
Let $G = (V,E)$ be the complete undirected graph on $n$ vertices, labelled $v_1, v_2,.. v_n$, with i.i.d. 
edge weights having the $Exp(1)$ distribution. We use $T_{ij}$ to denote the weight of the edge between 
nodes $v_i$ and $v_j$. The ``Steiner Tree Problem'' for a set of vertices $S \subset V$ concerns finding the minimum weight connected subgraph of $G$ containing $S$. This will naturally be a tree, and will be almost surely unique. We will use $w(S)$ to denote the weight of this tree, which is a random variable. Of course, we get very different distributions for $w(S)$ when $S$ is a ``typical'' set of vertices, (which might as well be fixed at the outset as $\{v_1,v_2,..v_k\}$) compared to $S$ being a ``worst case'' set of vertices, (where we maximise $w(S)$ over all sets $S$ of a certain size). Let us define
$$
W_{k,l} := \max_{S \subset V}(w(S): v_1,v_2,..v_k\ \in S, |S| = k+l).
$$
So $W_{k,l}$ is the minimum weight of a tree connecting $k+l$ vertices, $k$ of which are ``typical'', maximised over the choice of the $l$ remaining vertices. 

Janson proved the following results \citep{janson1999one} for geodesics between pairs of points on $G$, (i.e. Steiner trees connecting $2$ points):
\begin{theorem}[Janson '99]\label{thm:Jan} 
As the number of vertices, $n$, tends to infinity, the following hold:
\begin{enumerate}[(i)]
\item
$$\frac{W_{2,0}}{\log{n}/n} \stackrel{p}{\to} 1$$
\item
$$\frac{W_{1,1}}{\log{n}/n} \stackrel{p}{\to} 2$$
\item 
$$\frac{W_{0,2}}{\log{n}/n} \stackrel{p}{\to} 3$$
\end{enumerate}
\end{theorem}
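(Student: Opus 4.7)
The unifying tool is first-passage percolation (FPP) exploration from a source. For any vertex $v$, by the memorylessness of the exponential distribution, the ball $B_v(r) := \{u \in V : d(v,u) \le r\}$ grows, as $r$ increases, as a pure-birth Markov chain that jumps from size $i$ to $i+1$ at rate $i(n-i)$, with the new vertex chosen uniformly from $V \setminus B_v(r)$. Consequently, the eccentricity $\mathrm{ecc}(v) := \max_u d(v,u)$ satisfies
$$
\mathrm{ecc}(v) \stackrel{d}{=} \sum_{i=1}^{n-1} \xi_i, \qquad \xi_i \sim \mathrm{Exp}(i(n-i)) \text{ independent.}
$$

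\textbf{Parts (ii) and (i).} Observe $W_{1,1} = \mathrm{ecc}(v_1)$. Direct computation gives $\E[W_{1,1}] = \sum_{i=1}^{n-1}\frac{1}{i(n-i)} = \frac{2H_{n-1}}{n} \sim \frac{2\log n}{n}$ and $\mathrm{Var}(W_{1,1}) = \sum_{i=1}^{n-1}\frac{1}{i^2(n-i)^2} = O(1/n^2)$, so Chebyshev's inequality yields (ii). For (i), exchangeability of the non-$v_1$ vertices implies that $v_2$ is equally likely to be the $K$-th vertex absorbed by $B_{v_1}$ for any $K \in \{2,\ldots,n\}$. Hence $W_{2,0} \stackrel{d}{=} \sum_{i=1}^{K-1}\xi_i$ with $K$ uniform, yielding $\E[W_{2,0}] = \sum_{i=1}^{n-1}\frac{1}{i(n-i)} \cdot \frac{n-i}{n-1} = \frac{H_{n-1}}{n-1} \sim \frac{\log n}{n}$, with variance of order $o(\log^2 n/n^2)$. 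Chebyshev again concludes.

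\textbf{Part (iii).} Write $W_{0,2} = \max_v \mathrm{ecc}(v)$; the extra $\log n/n$ compared to $W_{1,1}$ should come from choosing a $v$ whose minimum incident edge is atypically heavy. Decompose $\mathrm{ecc}(v) = \xi_1(v) + R(v)$, where $\xi_1(v) \sim \mathrm{Exp}(n-1)$ is the weight of the lightest edge at $v$ and $R(v) = \sum_{i=2}^{n-1}\xi_i(v)$, so that $\E[R(v)] \sim 2\log n/n$ with variance $O(1/n^2)$. For the \emph{lower bound}, restrict to a set $S$ of $\lfloor n^{1/2}\rfloor$ vertices and consider the minimum-weight edge from each $v \in S$ to $V\setminus S$: these are independent $\mathrm{Exp}(n-|S|)$, so their maximum exceeds $(1-\epsilon)\log n/n$ w.h.p. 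Picking $v^*$ a maximizer and conditioning, the remaining FPP on $V \setminus \{v^*\}$ still gives $R(v^*) \ge (2-\epsilon)\log n/n$ w.h.p., and hence $W_{0,2} \ge \mathrm{ecc}(v^*) \ge (3-2\epsilon)\log n/n$. For the \emph{upper bound}, show $\mathbb{P}(\mathrm{ecc}(v) > (3+\epsilon)\log n/n) = o(1/n)$ for each $v$ and union bound over the $n$ vertices. The tail of $\xi_1(v)$ alone contributes $n^{-(1+\delta_1)}$ at threshold $(1+\delta_1)\log n/n$, while a Chernoff bound on $R(v)$ using its moment generating function $\prod_{i=2}^{n-1} i(n-i)/(i(n-i)-s)$ produces super-polynomial decay of $\mathbb{P}(R(v) > (2+\delta_2)\log n/n)$; splitting $\epsilon = \delta_1+\delta_2$ and combining finishes the argument.

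\textbf{Main obstacle.} The hardest step is the tail bound on $R(v)$ in part (iii): the MGF above blows up as $s \uparrow \min_{2\le i\le n-1} i(n-i) = 2(n-2)$, so a naive Chernoff gives only a polynomial bound. One must isolate the first few terms (where the rate $i(n-i)$ is of order $n$) and union-bound their contribution individually, then apply Chernoff to the well-behaved bulk; optimizing the cut-off to obtain decay faster than $1/n$ is the real technical work. A secondary delicacy is justifying in the lower bound that conditioning on $\xi_1(v^*)$ being atypically large does not perturb the distribution of the subsequent $\xi_i(v^*)$ enough to spoil the concentration of $R(v^*)$.
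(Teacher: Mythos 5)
Parts (i) and (ii) of your plan are correct: the pure-birth representation of the flooding time from $v_1$ as $\sum_{i=1}^{n-1}\xi_i$ with independent $\xi_i\sim \mathrm{Exp}(i(n-i))$, the mean/variance computation, and the uniform-$K$ argument for the typical distance all go through (note the paper does not prove Theorem~1 at all --- it is quoted from Janson --- so the only meaningful comparison is with Janson's argument, which the paper adapts in Section~2.2 and Lemma~1). The genuine problems are in part (iii). For the upper bound, your key claim that $R(v)=\sum_{i=2}^{n-1}\xi_i(v)$ exceeds $(2+\delta_2)\log n/n$ with super-polynomially small probability is false, and the error is visible in your ``main obstacle'' paragraph: $\min_{2\le i\le n-1} i(n-i)$ is not $2(n-2)$ but $n-1$, attained at $i=n-1$. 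The last increment $\xi_{n-1}(v)\sim\mathrm{Exp}(n-1)$ (the time to absorb the final vertex) lies inside $R(v)$, so $\prob\bigl(R(v)>(2+\delta_2)\log n/n\bigr)\ge c\,n^{-\delta_2}$, and the union bound over $n$ choices of $v$ then diverges for small $\delta_2$; no splitting $\epsilon=\delta_1+\delta_2$ with small $\delta_2$ can work. The standard repair (Janson's, mirrored in the paper's Lemma~1) is to keep both rate-$\Theta(n)$ terms $\xi_1,\xi_{n-1}$ together with the bulk and run a single Chernoff bound at $s=tn$, $t=1-o(1)$: then $\E e^{tn\,\mathrm{ecc}(v)}$ is of order $n^{2t+o(1)}(1-t)^{-2}$, giving $\prob\bigl(\mathrm{ecc}(v)>(3+\epsilon)\log n/n\bigr)\le n^{-(1+\epsilon)+o(1)}=o(1/n)$, after which the union bound closes.

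The lower bound of (iii) also has two gaps. First, with $|S|=\lfloor n^{1/2}\rfloor$ the minima are $\mathrm{Exp}(n-|S|)$, so each exceeds $(1-\epsilon)\log n/n$ with probability about $n^{-(1-\epsilon)}$, and $n^{1/2}$ independent trials give expected number of successes $n^{\epsilon-1/2}\to0$ for $\epsilon<1/2$: w.h.p.\ \emph{none} exceeds your threshold, and the maximum of these minima is in fact near $\tfrac{1}{2}\log n/n$, which would only yield a diameter bound around $\tfrac{5}{2}\log n/n$. You need $|S|$ of size $n^{1-\epsilon'}$ with $\epsilon'$ small relative to $\epsilon$, exactly as in the paper's Section~2.2 ($n_A=\lceil n^{1-\epsilon}\rceil$, threshold $(1-2\epsilon)\log n/n$). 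Second, what you call a ``secondary delicacy'' is the real technical core: conditioning on $U_{v^*}>(1-\epsilon)\log n/n$ conditions \emph{all} $n-|S|$ edges from $v^*$ to $V\setminus S$ to be large, which does change the law of the subsequent exploration and of the distances you wish to bound; Janson (and the paper) resolve this with an explicit coupling (the paper's Lemma~3, the modified weights $T'_{im}=T_{im}-U_i+U'_i$) and a deterministic case analysis, and your sketch simply asserts the conclusion ``$R(v^*)\ge(2-\epsilon)\log n/n$ w.h.p.'' without an argument. As written, part (iii) is not established in either direction.
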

The first of these results shows the typical weight of a Steiner tree connecting $2$ vertices is $\log{n}/n$ and the last shows that the diameter of G is with high probability $3\log{n}/n$.

Bollob\'{a}s et. al. \citep{bollobas2004value} proved a generalisation of the first of these results for Steiner trees on 2 or more vertices. We state below a less general version of the result they proved which fits more neatly into our context:
\begin{theorem}[Bollob\'{a}s et. al. '04] \label{thm:Bollobas}
For any positive integer $k$,
$$\frac{W_{k,0}}{\log{n}/n}\stackrel{p}{\to} (k-1) \text{ as } n \to \infty$$
\end{theorem}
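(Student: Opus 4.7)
I would establish the upper bound $W_{k,0} \leq (1+\epsilon)(k-1)\log n/n$ and the lower bound $W_{k,0} \geq (1-\epsilon)(k-1)\log n/n$ separately, each w.h.p.\ for every $\epsilon > 0$.

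The upper bound I would obtain via a first-passage percolation (FPP) construction. For each $v_i$, let $B_i(t)$ denote the shortest-path tree (equivalently, the FPP ball of radius $t$) from $v_i$. A standard branching-process coupling shows that $|B_i(t)|$ concentrates around $e^{nt}$ while this is $o(n)$. Take $t_n = (1+\epsilon/2)\log n/(2n)$, so that $|B_i(t_n)| \geq n^{(1+\epsilon/4)/2}$ for all $i$ simultaneously w.h.p. A birthday-paradox argument then forces $B_i(t_n)\cap B_j(t_n)\neq\emptyset$ for every pair $i\neq j$ with probability $1-o(1)$. Fixing a spanning tree $\tau$ on $\{1,\ldots,k\}$ and, for each edge $\{i,j\}\in\tau$, a meeting vertex $u_{ij}\in B_i(t_n)\cap B_j(t_n)$, the union over $\{i,j\}\in\tau$ of the within-ball shortest paths $v_i\leadsto u_{ij}$ (in $B_i$) and $v_j\leadsto u_{ij}$ (in $B_j$) is a connected subgraph containing every $v_i$ of total weight at most $2(k-1)t_n = (1+\epsilon/2)(k-1)\log n/n$, bounding $W_{k,0}$ from above.

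The lower bound I would obtain by induction on $k$, with Theorem~\ref{thm:Jan}(i) as the base case $k=2$. Assuming the bound for $k-1$, let $T^*$ be the optimal Steiner tree on $\{v_1,\ldots,v_k\}$, WLOG with every leaf in this terminal set. Let $P$ be the maximal path in $T^*$ with $v_k$ as one endpoint and all interior vertices of degree $2$ in $T^*$, let $b$ be its other endpoint, and set $T' := T^*\setminus(P\setminus\{b\})$. Then $T'$ is a Steiner tree on $\{v_1,\ldots,v_{k-1}\}$, so $w(T')\geq W_{k-1,0}$, yielding
\[
  W_{k,0} \;=\; w(T') + w(P) \;\geq\; W_{k-1,0} + d_G(v_k, V(T')).
\]
The inductive hypothesis controls $W_{k-1,0}$, so it suffices to show $d_G(v_k, V(T'))\geq(1-\epsilon/2)\log n/n$ w.h.p.

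The main obstacle is that $T'$ depends on the edges incident to $v_k$, so a naive FPP estimate from $v_k$ to $V(T')$ is not immediate. I would handle this by conditioning on $\{T_{ij}: i,j\neq k\}$, under which the candidate subtrees $T'$ form a collection of vertex sets in $V\setminus\{v_k\}$ determined without reference to $\{T_{kj}\}_{j\neq k}$, which remain i.i.d.\ $\mathrm{Exp}(1)$. A separate short argument gives $|V(T^*)|=O(\log n)$ w.h.p.\ (each topological edge of $T^*$ is a shortest path in $G$ between its endpoints, and such shortest paths are known to have $O(\log n)$ edges), hence $|V(T')|=O(\log n)$. For any fixed $S\subset V\setminus\{v_k\}$ with $|S|=O(\log n)$, an FPP growth estimate around $v_k$ yields $\prob(d_G(v_k,S)\leq(1-\epsilon/2)\log n/n) \leq |S|\,n^{-\epsilon/2}(1+o(1))$. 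The delicate step---and, in my view, the hardest part of the proof---is controlling the combinatorial count of plausible candidates $T'$ well enough to union-bound; this would likely require restricting attention to Steiner-optimal structures (internal vertices of degree $\geq 3$, leaves confined to the terminal set) and a careful coupling argument in order to recover the sharp constant $1$ rather than a first-moment-style $1/e$.
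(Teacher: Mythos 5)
First, note that the paper does not prove this statement at all: Theorem~\ref{thm:Bollobas} is quoted from Bollob\'as et al.~\citep{bollobas2004value}, so there is no in-paper proof to compare against. Judged on its own terms, your upper bound sketch is sound in outline and is close in spirit to the ball-growing algorithm the paper uses for its Theorem~\ref{thm:main} upper bound (grow FPP balls of size roughly $n^{1/2+\delta}$ around each terminal, force pairwise intersections, and join along a spanning tree of the terminals); the only point you gloss over is the dependence between the balls $B_i(t_n)$, which the paper handles by growing the balls in disjoint vertex sets, and which your sketch would need some such device to make the ``birthday paradox'' step legitimate.

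The lower bound, however, has genuine gaps. The tree surgery itself is not valid as stated: if $v_k$ has degree at least $2$ in $T^*$, deleting $P\setminus\{b\}$ deletes $v_k$ and disconnects the branches of $T^*$ hanging off $v_k$, so $T'$ is a forest and $w(T')\geq W_{k-1,0}$ fails; likewise an interior vertex of $P$ may itself be a terminal $v_j$, $j<k$ (terminals can have degree $2$ in $T^*$), in which case $T'$ does not span $\{v_1,\ldots,v_{k-1}\}$. These cases are not obviously negligible and would need an argument. More seriously, the step you yourself flag as ``the hardest part'' is exactly where the proof lives and it is left unresolved: $T^*$, hence $T'$, is \emph{not} measurable with respect to $\{T_{ij}:i,j\neq k\}$, so your conditioning does not produce a single (or even a polynomially small family of) candidate set $S$, and the required union bound over candidate subtrees is precisely the dependence problem; asserting that it ``would likely require a careful coupling'' is not a proof. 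Note also that your stated reason for avoiding a direct first-moment argument (a loss of a factor $1/e$) is unfounded for fixed $k$: if one union-bounds over Steiner \emph{topologies} (at most $O(1)$ shapes, each consisting of at most $2k-3$ paths subdivided by $s$ internal vertices, hence about $n^{s}\,\mathrm{poly}(s)$ vertex choices against an Erlang tail $c^{k+s-1}/(k+s-1)!$), the sum over $s$ gives an expected count of order $n^{a-(k-1)}$ up to logarithms for $c=a\log n/n$, which vanishes for every $a<k-1$. This yields the sharp constant directly, avoids your induction and its dependence problem altogether, and is the natural way to complete the lower bound; as written, your proposal does not establish it.
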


In other words, the typical weight of a Steiner tree connecting $k$ vertices is $(k-1)\log{n}/n$. 
In this paper we show the following generalisation of Janson's 3rd result concerning the diameter of $G$ (Theorem \ref{thm:Jan} (iii)):
\begin{theorem} \label{thm:main}
For any positive integer $k$,
$$\frac{W_{0,k}}{\log{n}/n} \stackrel{p}{\to} (2k-1) \text{ as } n \to \infty$$
\end{theorem}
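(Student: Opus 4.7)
The plan is to prove matching upper and lower bounds of $(2k-1 \pm \epsilon)\log n / n$ for $W_{0,k}$, extending Janson's $k=2$ argument to general $k$. The guiding heuristic is that an extremal Steiner tree on $k$ worst-case vertices decomposes as (i) $k$ ``escape edges'' incident to $k$ isolated vertices, each of weight $\approx \log n / n$ (contributing $k \log n / n$ in total), plus (ii) a core Steiner tree of weight $\approx (k-1) \log n / n$ on the $k$ escape endpoints, which is supplied by Theorem~\ref{thm:Bollobas}. These sum to $(2k-1)\log n / n$.

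For the upper bound, fix $\epsilon > 0$. Using first-passage percolation growth estimates, I would first show that with high probability every vertex $v$ has a ``neighbourhood'' $N(v) := \{w : d(v,w) \le (1+\epsilon/(3k)) \log n / n\}$ of size at least $n^{1-\delta}$ for some small $\delta = \delta(\epsilon) > 0$; this follows because even from the worst starting vertex the ball of radius slightly larger than $\log n / n$ is already macroscopic. For any $k$-subset $S = \{u_1, \ldots, u_k\}$, I would then pick $w_j \in N(u_j)$ such that $\{w_1, \ldots, w_k\}$ satisfies the bound $w(\{w_j\}) \le (k-1+\epsilon/3) \log n / n$ from Theorem~\ref{thm:Bollobas}. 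Such a choice exists because $\prod_j |N(u_j)| \ge n^{k(1-\delta)}$ candidate tuples are available, and a quantitative (Markov-type) form of Theorem~\ref{thm:Bollobas} shows all but $o(n^{k(1-\delta)})$ of them satisfy the bound. Concatenating the paths $u_j \to w_j$ with the Steiner tree on the $w_j$'s yields $w(S) \le (2k-1+\epsilon) \log n / n$.

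For the lower bound, call a vertex $v$ \emph{isolated} if $\min_{i \neq v} T_{v,i} \ge (1-\epsilon/(3k)) \log n / n$. A first-moment calculation gives roughly $n^{\epsilon/(3k)}$ isolated vertices in expectation, and a second-moment argument establishes concentration, so there are many more than $k$ of them with high probability. Pick any isolated $u_1, \ldots, u_k$ and let $T$ be the minimum Steiner tree on them. Each $u_j$ has at least one incident edge in $T$ of weight $\ge (1-\epsilon/(3k)) \log n / n$ by isolation, so if we write $a_j$ for the endpoint of some chosen escape edge from $u_j$, then $w(T) \ge k(1-\epsilon/(3k))\log n / n + w(T_{\mathrm{core}})$, where $T_{\mathrm{core}}$ is the residual subtree connecting the $a_j$'s. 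One then needs $w(T_{\mathrm{core}}) \ge (k-1-\epsilon/2) \log n / n$.

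The hardest step is this core lower bound: the optimal $T$ chooses $a_1, \ldots, a_k$ jointly to minimise $\sum_j T_{u_j, a_j} + w(\{a_j\})$, trading off inflated escape-edge residuals $T_{u_j, a_j} - (1-\epsilon/(3k)) \log n / n$, which are i.i.d.\ $\mathrm{Exp}(1)$ conditional on the isolation event by memorylessness, against clustering the $a_j$'s to shrink $w(\{a_j\})$. The crux is that saving $\Theta(\log n / n)$ in the core forces the $a_j$'s into a configuration of vanishing relative measure, controllable by a union bound, while for typical choices of $a_j$ Theorem~\ref{thm:Bollobas} applies directly. I would therefore condition on the isolation event, so that the residual edge weights from each $u_j$ become i.i.d.\ exponentials independent of the rest of the graph, and then close the lower bound with a Bollob\'as-style union bound over candidate configurations. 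The generalisation to $W_{k,l}$ promised in the abstract should follow by running the same two-sided analysis with $l$ isolated sources and $k$ typical ones.
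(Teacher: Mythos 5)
Your proposal has genuine gaps in both halves. The upper bound rests on the claim that w.h.p.\ \emph{every} vertex $v$ has at least $n^{1-\delta}$ vertices within distance $(1+\epsilon/(3k))\log n/n$. That claim is false: w.h.p.\ there exist vertices whose nearest neighbour is at distance at least $(1-\delta')\log n/n$, and the ball of radius $(1+\epsilon/(3k))\log n/n$ around such a vertex contains only about $n^{\epsilon/(3k)+\delta'+o(1)}$ vertices. (It would also contradict Theorem~\ref{thm:Jan}(iii): if every vertex had $n^{1-\delta}$ vertices within radius $\approx\log n/n$, any two such neighbourhoods would be joined by an edge of length $o(\log n/n)$, forcing the diameter below $3\log n/n$.) The correct worst-case radius needed to capture $n^{\beta}$ vertices is $(1+\beta+o(1))\log n/n$, which is exactly why the paper grows balls only to size $c_{k,n}\approx n^{(k-1)/k}$, pays radius $(2-\tfrac1k)\log n/n$ per ball, and makes the ``core'' a single common vertex found by an intersection argument (Lemma 2), so that $k(2-\tfrac1k)=2k-1$. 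If you repair your lemma by shrinking the neighbourhoods to the sizes actually available ($n^{\beta}$ at radius $(1+\beta)\log n/n$), your scheme then needs the minimum Steiner weight over the $n^{k\beta}$ candidate tuples to be about $(k-1-k\beta)\log n/n$, uniformly over all $\binom{n}{k}$ sets $S$, i.e.\ with failure probability $o(n^{-k})$; a ``Markov-type'' form of Theorem~\ref{thm:Bollobas} gives nothing of the sort, and the candidate tuples are not typical tuples anyway, since they are selected using the same edge weights. The paper's choice $\beta=(k-1)/k$ is precisely the one for which this extra estimate degenerates into a nonempty-intersection statement, backed by Chernoff bounds (Lemma 1) sharp enough to survive the union over $n^{k}$ subsets.

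The lower bound has the right heuristic ($k$ isolated vertices plus a typical core), but the step you flag as hardest is genuinely missing and cannot be closed as sketched. After conditioning on isolation, the optimal tree chooses the escape endpoints $a_1,\dots,a_k$ adversarially from all of $V$, so you must exclude every configuration: you need an estimate of the form $\prob\bigl(w(\{a_1,\dots,a_k\})\le c\log n/n\bigr)\le n^{c-(k-1)+o(1)}$ for fixed tuples, integrated against the law of the escape-edge excesses and summed over roughly $n^{k}$ tuples. Theorem~\ref{thm:Bollobas} is only a convergence-in-probability statement and supplies no such tail bounds, and you do not prove them; you also leave degenerate configurations unhandled (e.g.\ two isolated vertices joined directly in $T$, where one heavy edge serves as two ``escape edges'' and your inequality $w(T)\ge k(1-\epsilon/(3k))\log n/n+w(T_{\mathrm{core}})$ fails). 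The paper avoids all of this with Janson's conditioning/coupling device (Lemma 3 and the $T'$ construction in (4)): conditionally on the isolation-type events $\mathcal{E}^{(j)}_{l_j}$, the weight family is an explicit perturbation of an unconditioned family in which the escape edges simply gain $+b$, so the in-probability bound of Theorem~\ref{thm:Bollobas}, applied once to a single fixed tuple, suffices, with no union over configurations and no quantitative tails. As written, both halves of your argument therefore need either the paper's constructions or substantial new quantitative estimates.
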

We further generalise it to establish the result below, which follows from Theorem \ref{thm:Jan}, 
Theorem \ref{thm:Bollobas} and an adapted version of the lower bound proof of Theorem \ref{thm:main}:
\begin{theorem} \label{thm:general}
For any positive integers $k$ and $l$,
$$\frac{W_{k,l}}{\log{n}/n}\stackrel{p}{\to} (k+2l-1) \text{ as } n \to \infty$$
\end{theorem}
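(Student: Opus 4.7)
My plan is to prove Theorem \ref{thm:general} by establishing matching upper and lower bounds on $W_{k,l}$, and the argument splits into two essentially independent parts. For the upper bound, the strategy is to exhibit an explicit Steiner tree: starting from a near-optimal Steiner tree on the $k$ typical vertices, which has weight at most $(k-1+\epsilon)\log n/n$ with high probability by Theorem \ref{thm:Bollobas}, I would attach each of the $l$ extra vertices via its shortest path to $v_1$. The crucial ingredient here is a uniform bound on the single-source eccentricity: by Theorem \ref{thm:Jan}(ii), $W_{1,1}$ is exactly the maximum weighted distance from $v_1$ to any other vertex, so with high probability every vertex of $V$ lies within distance $(2+\epsilon)\log n/n$ of $v_1$. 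Hence, for any choice of the $l$ extra vertices, the resulting subgraph has weight at most $(k-1+\epsilon)\log n/n + l(2+\epsilon)\log n/n = (k+2l-1+O(\epsilon))\log n/n$, which gives the required one-sided convergence for each $\epsilon>0$.

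For the lower bound I would adapt the argument used to prove Theorem \ref{thm:main}. The aim is to exhibit, as a function of the random edge weights, a set of $l$ vertices $u_1,\ldots,u_l$ such that every Steiner tree containing $\{v_1,\ldots,v_k,u_1,\ldots,u_l\}$ has weight at least $(k+2l-1-\epsilon)\log n/n$ with high probability. The natural candidates are vertices lying far from the typical set $\{v_1,\ldots,v_k\}$. A first-passage-percolation-type growth estimate, analogous to the one underlying Theorem \ref{thm:Jan}(iii), guarantees that with high probability at least $l$ vertices lie at weighted distance exceeding $(2-\epsilon)\log n/n$ from $v_1$. One then decomposes any Steiner tree on the augmented vertex set into a subtree spanning $\{v_1,\ldots,v_k\}$, of weight at least $(k-1-\epsilon)\log n/n$ by the lower-bound half of Theorem \ref{thm:Bollobas}, together with $l$ arms connecting the $u_i$'s to that subtree, each of weight at least $(2-\epsilon)\log n/n$, which sums to the desired bound.

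The main obstacle is ensuring that these arms genuinely contribute additively rather than merging to save weight. This requires the $u_i$'s to be not only far from $\{v_1,\ldots,v_k\}$ but also pairwise far apart, so that in any Steiner tree their arms into the backbone are essentially edge-disjoint. Controlling this simultaneously calls for a union bound over the candidate configurations together with sharp tail estimates for ball growth in the first-passage metric around each $u_i$; since the exceptional probability per configuration decays polynomially in $n$ while the number of configurations is only polynomial in $n$, the resulting error terms can be absorbed into $\epsilon$. If a direct union bound proves too lossy, an iterative construction, choosing each $u_i$ to be far from $\{v_1,\ldots,v_k\} \cup \{u_1,\ldots,u_{i-1}\}$, should allow the same concentration estimates to be propagated through an induction on $i$.
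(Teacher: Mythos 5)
Your upper bound is sound and is exactly the paper's argument: Theorem \ref{thm:Jan}(ii) gives that w.h.p.\ every vertex lies within $(2+\varepsilon)\log n/n$ of $v_1$, Theorem \ref{thm:Bollobas} gives a tree of weight $(k-1+\varepsilon)\log n/n$ on the typical vertices, and attaching the $l$ extra vertices to $v_1$ by shortest paths gives $(k+2l-1+O(\varepsilon))\log n/n$. The gap is in your lower bound. You charge $(k-1-\varepsilon)\log n/n$ to a backbone spanning the typical vertices and $(2-\varepsilon)\log n/n$ to each of $l$ arms, and propose to force the arms to contribute additively by choosing the $u_i$ pairwise far apart. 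This cannot be made to work from distance information alone, because the diameter is only $(3+o(1))\log n/n$ by Theorem \ref{thm:Jan}(iii). Concretely, take $k=1$, $l=2$, so the target is $4\log n/n$ and the backbone is just $v_1$. The only facts available to your argument are $d(v_1,u_1),d(v_1,u_2)\geq(2-\varepsilon)\log n/n$ and $d(u_1,u_2)\leq(3+o(1))\log n/n$; a tree in which the two arms merge at a Steiner point $x$ with $d(u_1,x)=d(u_2,x)=1.5\log n/n$ and $d(x,v_1)=0.5\log n/n$ is consistent with all of these constraints and has total weight $3.5\log n/n<4\log n/n$. So no pairwise-separation condition you can impose (all distances being at most the diameter) rules out merged arms saving weight; excluding such configurations requires structural input beyond pairwise distances. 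A second, related problem is that your $u_i$ are chosen as functions of the edge weights, and the event that a given vertex is at distance $\geq(2-\varepsilon)\log n/n$ from $v_1$ has only polynomially small probability, so the existence of the $u_i$ together with typical behaviour of the remaining edge weights cannot be extracted from a union bound: conditioning on a vertex being deep biases exactly the edges your backbone and arm estimates rely on.

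The paper's proof resolves both issues with a different accounting and an explicit conditioning/coupling device. One finds $l$ vertices, one in each of $l$ disjoint blocks $A_1,\dots,A_l$, whose \emph{every} edge into the bulk $B$ (into which the typical vertices $v_1,\dots,v_k$ are placed) exceeds $b=(1-2\varepsilon)\log n/n$; Lemma 3 couples the conditioned weights to the unconditioned ones so that conditioning only (essentially) inflates edge weights. Then any tree on the $k+l$ selected vertices is bounded below by $l\cdot b$ (a distinct long first edge forced at each deep vertex) plus $(1-2\varepsilon)$ times the \emph{joint typical} Steiner weight of all $k+l$ vertices, which is at least $(k+l-1-\varepsilon)\log n/n$ by Theorem \ref{thm:Bollobas}; this totals $(k+2l-1-O(\varepsilon))\log n/n$. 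Note the per-bad-vertex charge there is $1\cdot\log n/n$ (the depth), with the remaining $1\cdot\log n/n$ absorbed into the joint Steiner term on all $k+l$ points, so merged arms are handled automatically because sharing is already permitted inside that joint term; your charge of $2\log n/n$ per arm is precisely the step that cannot be localized to disjoint pieces of the tree. To repair your argument you would essentially have to reproduce this block-plus-coupling structure, which is what the paper does by rerunning its Section 2.2 proof with the typical vertices placed in $B$.
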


\section{Proofs} \label{sec:proofs}

We split the proof of Theorem 3 into two parts, considering the upper and lower bounds separately. 

\subsection{Upper Bound}

We prove the upper bound by creating a random variable which dominates $W_{k,0}$ in Theorem 2, and applying a Chernoff bound to this dominating random variable. The upper bound is provided by an explicit algorithm to construct a spanning tree for $k$ specified vertices; by definition, the weight of such a spanning tree is an upper bound on the weight of the Steiner tree connecting these vertices.

For convenience we define $c_{k,n} = n^{\frac{k-1}{k}}\sqrt[k]{(k+1)\log{n}}$.

Heuristically we can think of this half of the proof as follows: With high probability, for \textbf{any} $k$ given starting vertices there is a vertex in $G$ within distance $(2-\frac{1}{k})\log{n}/n$ of all of them. Put another way, given any $k$ vertices in $G$, we can ``grow (metric) balls'' around the vertices until they each encompass $c_{k,n}$ vertices, at which point these balls will have a non-trivial intersecting set. The radius of any such ball in $G$ (around any vertex in $V$) is bounded above by $(2-\frac{1}{k})\log{n}/n$ with high probability (w.h.p), i.e., with probability tending to $1$ as $n$ tends to infinity. Hence, we can find a tree connecting a chosen vertex in the intersection of the balls and the $k$ starting vertices of weight at most $(2k-1)\log{n}/n$ by taking the union of the paths from the starting vertices to the chosen vertex. We can think of $c_{k,n}$ as $n^{\frac{k-1}{k}}$ multiplied by the right factor to ensure our balls are big enough to have a non-trivial intersection with high probability, but not big enough to increase their radius significantly.

If we simply grow these balls about the $k$ points in the standard way however, the calculations are complicated immensely by the dependencies on balls with overlapping vertices, so we grow the balls in a restricted fashion to avoid this complication.

We now describe the random variable which we will use to dominate $W_{k,0}$ in Theorem 2. We fix at outset $S := \{v_1,v_2,..v_k\}$ to be our $k$  starting vertices. The ``ball of radius $t$ about vertex $v_i$'' can be described via an infection model as vertices ``infected'' within time $t$ starting from a single infected vertex, $v_i$, and where the weight of an edge denotes the time it takes for the infection to pass from the vertex at one end to that at the other. In other words, the edge length $T_{ij}$ is the time taken for $v_i$ to directly infect $v_j$ once $v_i$ is infected, (and infecting an already infected vertex has no effect). Hence, vertex $v_k$ is in the infected set at time $t$ or equivalently ball of radius $t$ about $v_i$ if and only if there is a path of length no more than $t$ connecting $v_i$ and $v_k$.  

{\bf Algorithm:} The algorithm takes as input a set of specified vertices $S := \{v_1,v_2,..v_k\}$, and the edge weights of the complete graph. It outputs a spanning tree of the vertices in $S$ w.h.p.; with the residual probability, the algorithm fails and terminates with empty output. The algorithm proceeds in stages as described below.

\textbf{Stage 1.1}
Define $\widetilde{V}_1:=V\setminus \{v_2,..v_k\}$ and let $G|_{\widetilde{V}_{1}}$ denote
the subgraph of G induced by the vertex set $\widetilde{V}_{1}$. Consider an infection starting 
from the single ``infected'' vertex $v_1$, and spreading along the edges in $G|_{\widetilde{V}_{1}}$ 
with independent Exp(1) waiting times. We let infection spread until $c_{k,n}$ vertices are infected. 
Call this set of infected vertices $V_{1}^{1}$. 

Note that none of the vertices $\{v_2, \ldots, v_{k}\}$ are contained within the ball of infection grown 
around vertex $v_1$. Next, we will grow similar balls of infection around the remaining vertices 
$v_2,\ldots,v_k$, excluding all previously grown balls.


\textbf{Stage 1.i} For $2\le i\le n$, define $\widetilde{V}_{i} :=\{ \widetilde{V}_{i-1} \cup v_i \} 
\setminus V_{i-1}^{1}$, and let $G|_{\widetilde{V}_{i}}$ denote the subgraph of $G$ induced by the 
vertex set $\widetilde{V}_{i}$. Starting from a single ``infected'' vertex $v_i$, infection spreads 
independently along each edge in $G|_{\widetilde{V}_{i}}$ with independent Exp(1) waiting times. 
Exactly as for stage 1.1, the infection spreads until $c_{k,n}$ vertices are infected. Call the 
infected set $V_{i}^{1}$.

The crucial point to note is that all vertices seen in previous stages have been removed, and therefore so have all edges incident to them. Consequently, we have no information from prior stages about any of the edges being used in stage 1.i. Hence, the weights on these edges are indeed i.i.d. Exp(1) random variables, as claimed.

In the next step, we blow up the balls $\{ V_{i}^{1} \}_{i=1}^{n}$ of size $c_{k,n}$ grown around each vertex to balls of size $2c_{k,n}$, but using only single hops, i.e., edges rather than paths. In other words, we add an annulus of size $c_{k,n}$ to each ball. These annuli are all grown inside the same set $V \setminus \bigcup_{i=1}^{n} V_{i}^{1}$; vertices which were uninfected after the stage 1. As the balls $V_{i}^{1}$ grown at the previous stage were vertex disjoint, and this stage only uses single edges, the edges used to connect vertices (possibly the same vertex) to different balls will be distinct.

\textbf{Stage 2.i:} With $c_{k,n}$ vertices infected in stage i.1, ($1 \le i \le n$), we let the infection \textit{continue} to spread only along edges incident to $V_{i}^{1}$ to vertices in $V \setminus \bigcup_{i=1}^{n} V_{i}^{1}$. We note that the \textit{remaining} length of each such edge is Exp(1) by the memoryless property of Exponential distribution. Stage 2.i is completed once $c_{k,n}$ further vertices are infected. Call the set of vertices infected during this stage $V_{i}^{2}$.


\textbf{Construction of a spanning tree:} If the intersection of the sets $V_{i}^{2}$ constructed in Stage i.2 is 
non-empty, then pick an arbitrary vertex $w \in \bigcap_{i=1}^{k} V_{i}^{2}$ and, for each $j$ between $1$ and $k$, define $P_j$ as the (a.s. unique) minimum length path between $w$ and $v_j$. Define the length of time to execute stage $i.j$ as $Z_{i}^{j}$, and $Z_{i} := Z_{i}^{1}+Z_{i}^{2}$. The length of the path $P_j$ will necessarily be less than $Z_{j}$ (because $w \in V_{j}^{2}$). Define $\mathcal{T} = \bigcup_{i=1}^k P_i$. Then, $\mathcal{T}$ is a graph connecting $\{v_1,v_2,..v_k\}$. If $\bigcap_{i=1}^{k} V_{i}^{2} =\emptyset $, then declare the algorithm to have failed. We will provide an upper bound on the failure probability. Clearly, the total weight of the edges in $\mathcal{T}$ dominates $W_{k,0}$, and hence so does $\sum_{i = 1}^{n} {Z_i}$.

A similar construction appears in a paper of Bhamidi and van der Hofstad~\citep{bhamidi2013diameter} where they use it to obtain the joint distribution of pairwise distances between $k$ typical vertices. 

The following lemma will be needed. We adapt the proof of the upper bound of Theorem 1 which appears in Janson's paper \citep{janson1999one}.

\begin{lemma} For fixed $0 < \varepsilon < 1$, and $t := (1 - \frac{1}{\log{n}})(1-\varepsilon )$;
$$\mathbb{E}(e^{ntZ_{1}}) = O(c_{k,n})$$
\end{lemma}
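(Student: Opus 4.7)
The plan is to exploit the memorylessness of the exponential edge weights to identify $Z_1 = Z_1^1 + Z_1^2$ as a sum of independent exponentials with explicit rates, and then to multiply the corresponding moment generating functions. Because stage 1.1 examines only edges inside $G|_{\widetilde V_1}$, while stage 2.1 uses only fresh edges from $V_1^1$ to vertices not yet visited, the two stages touch disjoint edge sets, so $Z_1^1$ and $Z_1^2$ are independent and $\E(e^{ntZ_1}) = \E(e^{ntZ_1^1})\,\E(e^{ntZ_1^2})$.

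The standard first-passage-percolation analysis identifies the two distributions. During stage 1.1, once $m$ vertices of $G|_{\widetilde V_1}$ (which has $n-k+1$ vertices) are infected, there are $m(n-k+1-m)$ boundary edges whose residual weights are i.i.d.\ Exp$(1)$, so the waiting time to the next infection is Exp$(m(n-k+1-m))$. Thus $Z_1^1 = \sum_{m=1}^{c_{k,n}-1} Y_m$ with $Y_m \sim \mathrm{Exp}(m(n-k+1-m))$ independent. Likewise, after $j$ vertices have been added to $V_1^2$, the next waiting time is $\mathrm{Exp}(c_{k,n}(N-j))$, where $N := |V\setminus\bigcup_i V_i^1| = n - k c_{k,n}$; so $Z_1^2$ is an independent sum of such exponentials.

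The exponential MGF formula gives
\[
\log \E(e^{ntZ_1^1}) = -\sum_{m=1}^{c_{k,n}-1}\log\!\Bigl(1 - \tfrac{nt}{m(n-k+1-m)}\Bigr),
\]
valid provided $nt < m(n-k+1-m)$ for every $m$; the minimum rate occurs at $m=1$ and equals $n-k$, and our choice $t = (1-1/\log n)(1-\varepsilon)$ gives $nt < n-k$ for $n$ large, so the expression is well defined. The $m=1$ summand equals $-\log(1 - nt/(n-k)) \to -\log\varepsilon$, an $O(1)$ contribution that must be isolated since here the argument is close to $1-\varepsilon$, not small. For $m\ge 2$ the ratio is bounded away from $1$, so $-\log(1-x) \le x + O(x^2)$ applies; the squared terms contribute $O(1)$ (they decay like $1/m^2$ because $m \le c_{k,n} = o(n)$), and partial fractions yield
\[
\sum_{m=2}^{c_{k,n}-1}\frac{nt}{m(n-k+1-m)} = \frac{nt}{n-k+1}\sum_{m=2}^{c_{k,n}-1}\Bigl(\frac1m + \frac1{n-k+1-m}\Bigr) \le (1-\varepsilon)\log c_{k,n} + O(1),
\]
using $c_{k,n} = o(n)$ to control the second inner sum. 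Exponentiating gives $\E(e^{ntZ_1^1}) = O\!\bigl(c_{k,n}^{1-\varepsilon}\bigr) = O(c_{k,n})$.

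For $Z_1^2$, every rate $\mu_j = c_{k,n}(N-j)$ is at least $\tfrac12 c_{k,n} n$ for large $n$ (since $j \le c_{k,n} = o(N)$), so $nt/\mu_j = O(1/c_{k,n})$ and each factor $\mu_j/(\mu_j - nt)$ equals $1 + O(1/c_{k,n})$; the product of $c_{k,n}$ such factors is therefore $O(1)$. Multiplying the two MGFs delivers the claim. The main care point is isolating the $m=1$ term in the $Z_1^1$ estimate before applying the Taylor expansion of $-\log(1-x)$, since this is precisely where the $(1-\varepsilon)$ rather than $1$ appears in the exponent; everything else is a direct truncation of Janson's computation for $W_{2,0}$ in \citep{janson1999one} from $n-1$ down to $c_{k,n}-1$.
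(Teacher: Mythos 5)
Your proposal is correct and follows essentially the same route as the paper: the same decomposition $Z_1 = Z_1^1 + Z_1^2$ into independent sums of exponentials with rates $m(n'-m)$ and roughly $c_{k,n}(n-kc_{k,n}-j)$, the same isolation of the $m=1$ factor before applying $-\log(1-x)\le x+x^2$, the same partial-fraction estimate yielding the $\log c_{k,n}$ exponent, and the same $O(1)$ bound for the stage-2 MGF. The only cosmetic difference is an immaterial $k-1$ discrepancy in your count of available vertices in stage 2, and your phrase ``disjoint edge sets'' should really be read as the memorylessness argument (residual edge lengths are i.i.d.\ Exp(1)), which you in fact invoke.
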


\begin{proof}
As in the infection spreading model, we can think of $Z_{1}^{1}$ as the first time an infection spreads to a set 
of vertices of size $c_{k,n}$ starting from an initial starting vertex, ($v_1$), in $G|_{\widetilde{V}_{1}}$, where 
we define $n' := |\widetilde{V}_{1}| = n - k + 1$. If vertex $v_i$ is infected before vertex $v_j$, we think of the 
edge length $T_{ij}$ as the time taken for $v_i$ to directly directly infect $v_j$. We define $X_i, 1 \leq i \leq c_{k,n}-1$, 
as the time between $i$ vertices being infected and $i+1$ vertices being infected. Then, $X_1$ is the minimum 
of $(n - k) = (n'-1)$ exponential mean $1$ random variables corresponding to the edges between $v_1$ and the 
nodes in $\tilde V_{1}$. Hence, $X_1$ is distributed as Exp($n'-1$). 

Denote the vertex infected at time $X_1$ by $u_1$. At this time, there are two infected nodes and $n'-2$ uninfected 
nodes in $\tilde V_1$. The $n'-2$ edges between $u_1$ and the uninfected nodes have i.i.d. Exp(1) lengths. The 
lengths of all edges between $v_1$ and the uninfected nodes are necessarily bigger than $X_1$; moreover, by the 
memoryless property of the exponential distribution, they exceed $X_1$ by random amounts which are also i.i.d. Exp(1) random variables. Hence $X_2$, the additional time then taken for the 3rd vertex to be infected, will be the minimum of the $2(n'-2)$ independently distributed Exp(1) random variables. Hence $X_2 \sim$ Exp($2(n'-2)$). Likewise, $X_i\sim$ Exp($i(n'-i)$). Moreover, the random variables $X_1,X_2,\ldots$ are mutually independent, and
$$
Z^1_1= X_1+X_2+\ldots+X_{c_{k,n}-1}.
$$

Next, $Z_{1}^{2}$ is independent of $Z_{1}^{1}$, again by the memoryless property of the exponential 
distribution. Indeed, if node $u_i$ was infected at time $t_{u_i} < Z_{1}^{1}$ and node $u_j$ was not 
infected before time $Z_{1}^{1}$, then the residual edge length $T_{ij} -(Z_{1}^{1} - t_{u_i})$ has an 
Exp(1) distribution, and all the residual edge lengths are mutually independent. If we define $X'_i$, for 
$1 \leq i \leq c_{k,n}$, as the time between the $(i-1)$th vertex being infected and the $i$th vertex being 
infected \textbf{after} time $Z_{1}^{2}$, then $X'_i$ is the minimum of $(n'-kc_{k,n}-(i-1)) c_{k,n}$ independent 
Exp(1) random variables, and has an Exp($(n'-kc_{k,n}-i+1)c_{k,n}$) distribution. Note that we subtract $kc_{k,n}$ from $n'$ above since in stage 2.1 we remove $\bigcup_{i=1}^{n} V_{i}^{1}$ from $V$.

Now, for $-\infty \leq t < 1-1/n'$, we have
$$
\mathbb{E}e^{ntZ_{1}^{1}} = \prod_{i=1}^{(c_{k,n}-1)}\mathbb{E}e^{ntX_i} = 
\prod_{i=1}^{(c_{k,n}-1)}\left(1-\frac{nt}{i(n'-i)}\right)^{-1}.
$$
Fix $0 < \varepsilon < 1$ and set $t = (1-\frac{1}{\log{n}})(1-\varepsilon)$. Then, using the inequality 
$-\log{(1-x)} \leq x+x^2$, which holds for all $x \in \lbrack 0,3/5\rbrack $, we obtain that
$$
\begin{aligned}
\mathbb{E}e^{ntZ_{1}^{1}} &= \Bigl( 1-\frac{nt}{n'-1} \Bigr)^{- 1} 
\exp{ \Bigl( \sum_{i=2}^{(c_{k,n}-1)} -\log{  \Bigl( 1-\frac{nt}{i(n'-i)} \Bigr) } \Bigr) } \\
&\leq \Bigl( 1-\frac{nt}{n'-1} \Bigr)^{-1} 
\exp{ \Bigl( \sum_{i=2}^{c_{k,n}} \Bigl[ \frac{nt}{i(n'-i)} + \Bigl( \frac{nt}{i(n'-i)} \Bigr)^{2} \Bigr] \Bigr) } \\
&= \Bigl( 1-\frac{nt}{n'-1} \Bigr)^{-1}
\exp{ \Bigl( \sum_{i=2}^{c_{k,n}} \Bigl[ \frac{n}{n'} \Bigl( \frac{t}{i}+\frac{t}{n'-i} \Bigr) +
\Bigl( \frac{nt}{i(n'-i)} \Bigr)^{2} \Bigr] \Bigr) } \\
&= \left(1-t+O(n^{-1})\right)^{-1}\exp{\left(\frac{n}{n'}t\log{(c_{k,n})}+O(1)\right)}\\
&= O(c_{k,n}/\varepsilon) = O(c_{k,n}).
\end{aligned}
$$
Similarly, and still with $t = (1-1/\log{n})(1-\varepsilon )$, we obtain for large enough $n$ that,
$$
\begin{aligned}
&\mathbb{E}e^{ntZ_{1}^{2}} \; = \; \exp{ \biggl( \sum_{i=1}^{c_{k,n}} 
-\log{ \Bigl(( 1-\frac{nt}{c_{k,n}(n-kc_{k,n}-i+1)} \Bigr) } \biggr) } \\
&\leq \exp{ \biggl( \sum_{i=1}^{c_{k,n}} \Bigl[ \frac{nt}{c_{k,n}(n-kc_{k,n}-i+1)}+
\Bigl( \frac{nt}{c_{k,n}(n-kc_{k,n}-i+1)} \Bigr)^{2} \Bigr] \biggr) } \\
&\leq \exp{\Bigl( \frac{c_{k,n}nt}{c_{k,n}(n-(k+1)c_{k,n})}+O(1) \Bigr) } \\
&\leq \exp{ \bigl( 2+O(1) \bigr) }. \\
\end{aligned}
$$
Combining the above estimates, we obtain
$$\mathbb{E}e^{ntZ_{1}} = \mathbb{E}e^{ntZ_{1}^{1}} \cdot \mathbb{E}e^{ntZ_{1}^{2}} = O(c_{k,n})$$
This concludes the proof of the lemma.
\end{proof}

More generally, for all $i \in \{ 1,\ldots,k \}$, for fixed $0 < \varepsilon < 1$, and 
$t = (1 - \frac{1}{\log{n}})(1-\varepsilon )$, one can show that
$$\mathbb{E}(e^{ntZ_{i}}) = O(c_{k,n}),$$ 
using the same proof as above but with $n' := |\widetilde{V}_{i}|$.

The following lemma will be used to show that the random sets $V_{i}^{2}$ have non-empty intersection w.h.p. 

\begin{lemma}
Let $A_i, 1 \leq i \leq k$ be independent uniformly chosen subsets of $[n]$ of size $m \leq n$. Define $B = \bigcap^{k}_{i=1} A_i$. Then 

$$\prob(B = \emptyset) \leq e^{-m^{k}n^{1-k}}$$
\end{lemma}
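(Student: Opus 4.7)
The plan is to reduce the claim to the standard bound $\prob(B=\emptyset) \leq \prod_j \prob(j \notin B)$, which follows once a negative dependence property of the events $\{j\in B\}$ is in hand. Write $Z_j := \prod_{i=1}^k \mathbf{1}[j \in A_i]$ for the indicator that $j \in B$; then by independence of the $A_i$'s and $\prob(j \in A_i) = m/n$ one has $\mathbb{E}[Z_j] = (m/n)^k$, and
$$\prob(B=\emptyset) = \mathbb{E}\Bigl[\prod_{j=1}^n (1-Z_j)\Bigr].$$
If I can show the product inequality $\mathbb{E}[\prod_j (1-Z_j)] \leq \prod_j (1 - \mathbb{E}[Z_j])$, then using $1-x \leq e^{-x}$ the right side is at most $(1-(m/n)^k)^n \leq e^{-n(m/n)^k} = e^{-m^k n^{1-k}}$, which is the stated bound.

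The crux is the product inequality, which I would establish by proving that $(Z_1,\dots,Z_n)$ is negatively associated (NA). I would do this in three short steps. First, for each fixed $i$, the indicators $V^i_j := \mathbf{1}[j\in A_i]$ form a uniformly random $0/1$-vector with fixed sum $m$, and such ``sampling without replacement'' vectors are a classical NA example (Joag-Dev and Proschan). Second, since the $A_i$ are mutually independent, the full array $(V^i_j)_{i,j}$ is NA (independent concatenation preserves NA). Third, each $Z_j = \prod_i V^i_j$ is a non-decreasing function of the disjoint block of coordinates $\{(i,j):1\leq i\leq k\}$, and NA is preserved under componentwise monotone functions on disjoint blocks, so $(Z_j)_{j=1}^n$ is NA. Iterating the defining covariance inequality of NA with the (all) non-increasing maps $f_j(z) = 1-z$ then yields the desired $\mathbb{E}[\prod_j (1-Z_j)] \leq \prod_j \mathbb{E}[1-Z_j]$.

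The main obstacle is packaging the NA closure properties cleanly without letting the proof of a short lemma balloon into an exposition of the Joag-Dev--Proschan framework. A more self-contained but less elegant alternative is induction on $k$: condition on $C := \bigcap_{i<k} A_i$ and use the exact identity $\prob(A_k \cap C = \emptyset \mid C) = \binom{n-|C|}{m}/\binom{n}{m} \leq (1-m/n)^{|C|}$, reducing the problem to bounding $\mathbb{E}[(1-m/n)^{|C|}]$ from above. Because $e^{-\lambda x}$ is convex, Jensen points the wrong way here, so this route ultimately requires a lower-tail Chernoff bound for $|C|$ and again leans on negative dependence of the indicators defining $|C|$. For this reason I expect the direct NA argument above to give the cleanest proof.
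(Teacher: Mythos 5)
Your argument is correct: the events $\{j\in B\}$ are indeed negatively associated, since for each fixed $i$ the indicator vector of a uniformly random $m$-subset of $[n]$ is a permutation (fixed-sum) distribution and hence NA, independent concatenation preserves NA, $Z_j=\prod_i \mathbf{1}[j\in A_i]$ is a coordinatewise non-decreasing function of a block of coordinates disjoint across $j$, and the covariance inequality applied repeatedly to the non-negative, non-increasing factors $1-Z_j$ gives $\E\bigl[\prod_j(1-Z_j)\bigr]\le\prod_j(1-(m/n)^k)=(1-(m/n)^k)^n\le e^{-m^k n^{1-k}}$. This is the same underlying idea as the paper's proof --- both reduce the claim to $\prob(B=\emptyset)\le\prod_j\prob(j\notin B)$ via a negative-dependence statement --- but the route differs: the paper proves the needed inequality $\prob(j\in B\mid [j-1]\subset B^C)\ge\prob(j\in B)$ by an explicit, elementary coupling (realising each $A_i$ and deleting one element to produce $(m-1)$-subsets of $[n-1]$, so that $B'\subseteq B$), then telescopes over $j$; you instead invoke the Joag-Dev--Proschan negative association framework as a black box. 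Your version is shorter and arguably more transparent once the NA closure properties are granted, and it generalises immediately to other fixed-size sampling schemes; the paper's coupling has the advantage of being fully self-contained, requiring no appeal to external results. If you keep the NA route, you should give precise citations for the three closure properties you use (fixed-sum indicators are NA; independent unions are NA; monotone functions on disjoint blocks preserve NA), since the lemma is otherwise short enough that an unreferenced appeal would carry most of the weight of the proof.
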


\begin{proof}
The main step in the proof involves a coupling, so we first define the related random objects we will couple with:

Let ${A'}_i, 1 \leq i \leq k$ be independent uniformly chosen subsets of $[n-1]$ of size $m-1$, (so here our set is 1 element smaller, and all subsets are also 1 element smaller). Define ${B'} = \bigcap^{k}_{i=1} {A'}_i$.

We note that for all $2 \leq j \leq n$
$$
\begin{aligned}
\prob\left(\{j \} \in B\Bigm|[j-1] \subset B^C\right) &= \frac{\prob\left([j-1] \subset B^C\Bigm|\{j \} \in B\right)}{\prob\left([j-1] \subset B^C\right)}\prob\left(\{j \} \in B\right)\\
&= \frac{\prob\left([j-1] \subset {B'}^C\right)}{\prob\left([j-1] \subset B^C\right)}\prob\left(\{j \} \in B\right)
\end{aligned}
$$
The last line follows since $\{j \} \in B$ means $\{j \} \in A_i$ $\forall$ $1 \leq i \leq k$, and so the remaining $m-1$ elements of $A_i$ will be a uniformly chosen subset of $[n] \backslash \{j \}$, independent for each $i$.

Now we couple $B$ and ${B'}$ as follows. For each $1 \leq i \leq k$ we first realise $A_i$ (uniformly in $[n]$), and then derive ${A'}_i$ by eliminating ${n}$ from $A_i$ if ${n} \in A_i$ or a uniformly random element of $A_i$ if not. Clearly ${A'}_i$ are i.i.d. and have the required distribution. Further, for this construction ${B'} \subseteq B$. Hence
$$
\prob\left([j-1] \subset {B'}^C\right) \geq \prob\left([j-1] \in B^C\right)
$$
and so 
$$
\prob\left(\{j \} \in B\Bigm|[j-1] \subset B^C\right) \geq \prob\left(\{j \} \in B\right)
$$

Finally,
$$
\begin{aligned}
\prob(B = \emptyset) &= \prob(\{1 \} \in B^C)\prod_{j=2}^{n}{\prob\left(\{j \} \in B^C \Bigm|[j-1] \subset B^C\right)}\\
&\leq \prod_{j=1}^{n}{\prob(\{j \} \in B^C)}\\
&= (1-\prob(\{j \} \in B))^n = \left(1-\left(\frac{m}{n}\right)^k\right)^n \leq e^{(-m^{k}n^{1-k})}
\end{aligned}
$$
\end{proof}

The proof of the upper bound is completed using the following lemma. 

\begin{lemma}
$\forall \, \varepsilon > 0$, as $n \to \infty$:
$$
\prob(W_{0,k} > (2k-1+(2k+1)\varepsilon)\log{n}/n) \to 0.
$$
\end{lemma}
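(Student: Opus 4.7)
The plan is to reduce to the per-subset analysis already developed: for each $k$-subset $S\subset V$, run the algorithm from the upper bound of Theorem~\ref{thm:main} with $S$ playing the role of $\{v_1,\ldots,v_k\}$, and union-bound over the $\binom{n}{k}\le n^k$ choices of $S$. For each fixed $S$, two bad events can occur: either the algorithm fails (its Stage~2 balls satisfy $\bigcap_{i=1}^k V_{i}^{2} = \emptyset$), or it succeeds but produces a spanning tree of $S$ of weight greater than $x := (2k-1+(2k+1)\varepsilon)\log n/n$. If neither bad event occurs for any $S$, then $W_{0,k} \le x$, so it suffices to show that the probability of the union, over all $S$, of the two bad events tends to zero.

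First I would bound the per-$S$ failure probability using Lemma~2. The key observation is that, by the exchangeability of the Exp$(1)$ clocks, at every step of the infection process the newly infected vertex is uniformly distributed over the currently uninfected ones; hence, conditional on $\bigcup_{i=1}^k V_i^1$, the sets $V_1^2,\ldots,V_k^2$ are mutually independent (they depend on disjoint collections of edges) and each is uniformly distributed over the subsets of $V\setminus\bigcup_i V_i^1$ of size $c_{k,n}$. Applying Lemma~2 with $m=c_{k,n}$ and ground set of size $n-kc_{k,n} = n(1+o(1))$, together with $c_{k,n}^k = (k+1)n^{k-1}\log n$, yields
$$
\prob(\text{algorithm fails for }S) \;\le\; \exp\!\bigl(-c_{k,n}^k(n-kc_{k,n})^{1-k}\bigr) \;=\; n^{-(k+1)+o(1)}.
$$

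Next, when the algorithm succeeds, the tree it outputs has weight at most $\sum_{i=1}^k Z_i(S)$, where the $Z_i(S)$ are mutually independent: the balls $V_i^1$ are vertex-disjoint by construction, and the only edges queried in computing $Z_i = Z_i^1 + Z_i^2$ are incident to $V_i^1$, so the edges governing the different $Z_i$ are disjoint. Combining this with the generalisation of Lemma~1 stated immediately after its proof gives $\E e^{nt\sum_i Z_i(S)} = O(c_{k,n}^k)$ for $t = (1-1/\log n)(1-\varepsilon)$, and Markov's inequality then yields
$$
\prob\!\Bigl(\textstyle\sum_{i=1}^k Z_i(S) > x\Bigr) \;\le\; O(c_{k,n}^k)\,e^{-ntx} \;=\; O\bigl(n^{-k-2\varepsilon+(2k+1)\varepsilon^2}\log n\bigr),
$$
after expanding $ntx \sim [(2k-1)+2\varepsilon-(2k+1)\varepsilon^2]\log n$. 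We may assume $\varepsilon$ is small enough that $2\varepsilon - (2k+1)\varepsilon^2 > 0$, since the statement for smaller $\varepsilon$ implies the statement for larger $\varepsilon$.

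Finally, a union bound over the $\binom{n}{k}\le n^k$ choices of $S$ gives
$$
\prob(W_{0,k} > x) \;\le\; n^k\bigl[n^{-(k+1)+o(1)} + O(n^{-k-2\varepsilon+(2k+1)\varepsilon^2}\log n)\bigr] \;=\; o(1).
$$
The main delicate step I anticipate is the application of Lemma~2: the sets $V_i^2$ are not a priori independent uniformly chosen subsets of a fixed ground set, and one must argue carefully, via the symmetry of the exponential infection process and the disjointness of the edge sets probed in the different Stage~1 and Stage~2 runs, that \emph{conditional on} $\bigcup_i V_i^1$ they satisfy the hypothesis of Lemma~2 on a ground set of size $n - kc_{k,n}$.
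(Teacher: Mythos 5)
Your proposal is correct and follows essentially the same route as the paper: run the two-stage ball-growing algorithm for a fixed $k$-set, bound the failure probability via Lemma~2, apply a Chernoff bound to $\sum_i Z_i$ via (the generalisation of) Lemma~1, union-bound over the $\binom{n}{k}$ subsets, and extend to all $\varepsilon>0$ by monotonicity. The only differences are cosmetic bookkeeping in the exponents (you keep $-k-2\varepsilon+(2k+1)\varepsilon^2$ where the paper simplifies under $\varepsilon<\tfrac{1}{2k+1}$), and you make explicit the uniformity/independence of the sets $V_i^2$ conditional on $\bigcup_i V_i^1$, which the paper asserts more briefly.
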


\begin{proof}
We first use Lemma 2 to show that the algorithm described earlier succeeds w.h.p.. Next, conditional on the event that it succeeds, we use Lemma 1 to obtain a Chernoff bound on the weight of the tree ${\cal T}$ that it constructs, (which clearly dominates $W_{k,0}$, as $W_{k,0}$ has minimum weight among all graphs connecting $\{ v_1,v_2,\ldots, v_k \}$). We finish the argument using a union bound.

Note that by construction:

$$|V_{i}^{2}| = c_{k,n}, 1 \leq i \leq k$$
where $V_{i}^{2}$ are all uniformly chosen subsets of $V(G)\setminus \bigcup_{j=1}^{k} V_{j}^{1}$, and ($\forall \varepsilon \geq 0$) 

$$|V(G)\setminus \bigcup_{j=1}^{k} V_{j}^{1}| = n - kc_{k,n} > n(1-\varepsilon)$$
for big enough $n$. Clearly in the statement of Lemma 2 the given probability is monotone decreasing in the size of the uniformly chosen subsets, (so gives the upper bound we require in this setting). Applying this lemma: 

$$
\begin{aligned}
\prob(\bigcap_{i = 1}^{k} V_{i}^{2} = \emptyset) &\leq e^{-c_{k,n}^k(n(1-\varepsilon))^{1-k}} \\
&= e^{-(k+1)(\log{n}) n^{k-1}n^{1-k}(1-\varepsilon)^{1-k}} \\
&\leq e^{-(k+1)\log{n}} \\
&= n^{-(k+1)}
\end{aligned}
$$

Now we apply a Chernoff bound using Lemma 1. 
Fix $t = (1-1/\log{n})(1-\varepsilon )$:
$$
\begin{aligned}
&\prob(\{ \text{algorithm fails} \} \cup w(\mathcal{T}) \geq (2k-1+(2k+1)\varepsilon)\log{n}/n) \\
&\leq \prob(\text{algorithm fails}) + \prob(w(\mathcal{T}) \geq (2k-1+(2k+1)\varepsilon)\log{n}/n) \\
&\leq \prob(\bigcap_{i = 1}^{k} V_{i}^{2} = \emptyset) + \mathbb{E}(e^{ntw(\mathcal{T}) - t(2k-1+(2k+1)\varepsilon)\log{n}})\\
&\leq n^{-(k+1)} + e^{-t(2k-1+(2k+1)\varepsilon)\log{n}}\mathbb{E}(e^{ntw(\mathcal{T})})\\
&\leq n^{-(k+1)} + e^{-t(2k-1+(2k+1)\varepsilon)\log{n}}\prod_{i=1}^{k}{\mathbb{E}(e^{ntZ_{i}})}\\
&= O(n^{-(k+1)} + n^{-t(2k-1+(2k+1)\varepsilon)}(c_{k,n})^k)\\
&= O(n^{-(k+1)} + n^{-(2k-1+\varepsilon)}(k+1)(\log{n})n^{k-1})\\
&= O(n^{-(k+\varepsilon)}\log{n})
\end{aligned}
$$
In the second last line we have used that $\varepsilon < \frac{1}{2k+1}$. We now have
$$
\prob(W_{k,0} \geq (2k-1+(2k+1)\varepsilon)\log{n}/n) = O(n^{-(k+\varepsilon)}\log{n})
$$

We take the union bound of the above probability (relating to k typical vertices) over all subsets of G of size $k$, and so $\forall \, 0 < \varepsilon < \frac{1}{2k+1}$;
$$
\begin{aligned}
\prob(W_{0,k} \geq (2k-1+(2k+1)\varepsilon)\log{n} /n) &= O\left( \binom{n}{k} n^{-(k+\varepsilon)}\log{n}\right) \\
&= O(n^{-\varepsilon}\log{n})
\end{aligned}
$$

We have proved the lemma for $\varepsilon < \frac{1}{2k+1}$, but it is clear that the event in the lemma statement is decreasing in $\varepsilon$, and so the result holds $\forall \, \varepsilon > 0$.

\end{proof}
This concludes the proof of the upper bound.

\subsection{Lower Bound}

We now turn our attention to the lower bound. This entire subsection is an extension of the proof offered in Janson's paper \citep{janson1999one} for the equivalent lower bound result in Theorem 1 (iii), (i.e. the steiner tree on 2 vertices). We follow his proof very closely, but require some slightly uglier notation for the generalisation.

Fix $\varepsilon > 0$. Partition the vertex set $\{v_{1}, ..., v_{n}\}$ of our graph $G$ into the sets $A_1 = \{v_{1}, ..., v_{n_A}\}$, $A_2 = \{v_{n_A +1}, ..., v_{2n_A}\}$, ..., $A_k = \{v_{(k-1)n_A +1}, ..., v_{kn_A}\}$ and $B = \{v_{kn_A +1}, ..., v_{n}\}$, where $n_A = \lceil n^{1-\varepsilon}\rceil$. Let $A = \bigcup_{i \leq k} A_i$ and $n_B = n-kn_A$.

For $v_i \in V, i\leq kn_A$, let $U_i = \text{min}_{v_j \in B} T_{ij}$. Then ($U_i$, $i\leq kn_A$) are independent with $U_i \sim Exp(n_B)$. In particular,

\begin{align*}
\mathbb{P}(U_i > (1-2\varepsilon)\log{n}/n) 
&= \exp(-(1-2\varepsilon)\frac{n_B}{n}\log{n}) \\
&\geq \exp(-(1-2\varepsilon)\log{n}) = n^{2\varepsilon-1}
\end{align*}

and thus
$$\mathbb{P}(U_i \leq (1-2\varepsilon)\log{n}/n \text{ for every } v_{i}\in A_1) \leq (1-n^{2\varepsilon-1})^{n^{1-\varepsilon}} < e^{-n^{\varepsilon}}.$$

Let, for $v_{l_{j}} \in A_j$, $\mathcal{E}^{(j)}_{l_{j}}$ be the event that $U_{l_{j}} > (1-2\varepsilon)\log{n}/n$ but $U_i \leq (1-2\varepsilon)\log{n}/n$ for $i < {l_{j}}, v_{i} \in A_j$. Then for a fixed $j$ the events $\mathcal{E}^{(j)}_{l_{j}}$ are disjoint and, by the above,

$$\sum_{v_{l_{j}} \in A_j}\mathbb{P}(\mathcal{E}^{(j)}_{l_{j}}) = \mathbb{P}\left(\bigcup_{v_{l_{j}} \in A_j} \mathcal{E}^{(j)}_{l_{j}}\right) > 1-e^{-n^{\varepsilon}}$$

which implies

\begin{equation*}
\mathbb{P}\left(\bigcap_{j \leq k}\bigcup_{v_{l_{j}} \in A_j} \mathcal{E}^{(j)}_{l_{j}}\right) > 1-ke^{-n^{\varepsilon}} \tag{1} 
\end{equation*}

The idea behind the proof is to show that conditioned on $\mathcal{E}^{(j)}_{l_{j}}$, the weight of a typical minimum weight steiner tree containing vertex $v_{l_{j}}$ is increased by $(1-2\varepsilon)\log{n}/n$, while conditioning on $U_i < (1-2\varepsilon)\log{n}/n \text{ for }i<l_{j}, v_{i}\in A_j$ hardly affects the result. If we have $k$ such events, (one for each set $A_j$), the minimum weight Steiner tree containing the corresponding $k$ vertices will typically increase in weight by $k(1-2\varepsilon)\log{n}/n$. It is from this and Theorem 2 that we deduce our desired lower bound.

We will use the following lemma.

\begin{lemma}

Suppose that $\mu, b > 0$ and $X \sim Exp(\mu)$, and define
$$f(x) = -\mu \log{(e^{-b/\mu}+(1-e^{-b/\mu})e^{-x/\mu})}.$$

\begin{enumerate}[(i)]
	\item The distribution of $f(X)$ equals the conditional distribution of $X$ given $X \leq b$.
	\item If further $0 \leq \alpha < 1$ and $b/\mu \geq \alpha(1-\log{\alpha})/(1-\alpha)$, then $f(x) \geq \alpha x$ when $0 \leq x \leq \alpha^{-1}b-\mu$. Consequently,
\end{enumerate}

$$\prob(f(X) \leq \alpha X) \leq \prob(X > \alpha^{-1}b - \mu) = e^{1-\alpha^{-1}b/\mu}$$

\end{lemma}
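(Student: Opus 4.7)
The plan is to handle (i) by a direct distribution-function computation, and (ii) by exploiting the concavity of $f$ together with an elementary algebraic check at a single endpoint.

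For part (i), I would compute the CDF of $f(X)$ by inverting $f$ (which is strictly increasing). The inequality $f(x) \leq y$ rearranges, using the definition of $f$, to
$$x \leq -\mu \log\bigl((e^{-y/\mu} - e^{-b/\mu})/(1 - e^{-b/\mu})\bigr).$$
Substituting into $\prob(X \leq x) = 1 - e^{-x/\mu}$ collapses the expression to $\prob(f(X) \leq y) = (1 - e^{-y/\mu})/(1 - e^{-b/\mu})$, which is exactly the conditional CDF $\prob(X \leq y \mid X \leq b)$ on $0 \leq y \leq b$. Note $f$ maps $[0,\infty)$ onto $[0,b)$, so $f(X) \leq b$ almost surely, matching the support of the conditional distribution.

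For part (ii), the key observation is that $f$ is concave on $[0,\infty)$ with $f(0) = 0$. Writing $a = e^{-b/\mu}$, a short calculation gives $f'(x) = (1-a)e^{-x/\mu}/(a + (1-a)e^{-x/\mu})$ and
$$f''(x) = -\frac{a(1-a)e^{-x/\mu}}{\mu\bigl(a + (1-a)e^{-x/\mu}\bigr)^{2}} < 0.$$
A concave function passing through the origin has monotonically decreasing secant slope $f(x)/x$, so the set $\{x \geq 0 : f(x) \geq \alpha x\}$ is an interval of the form $[0, x_0]$. It therefore suffices to verify the inequality at the single endpoint $x = \alpha^{-1}b - \mu$, i.e.\ to check $f(\alpha^{-1}b - \mu) \geq b - \alpha\mu$. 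After exponentiation and rearrangement this becomes
$$e^{b/\mu} - 1 \leq (e^\alpha - 1)\,e^{b/(\alpha\mu) - 1},$$
and dropping the $-1$ on the left yields the sufficient condition $b/\mu \geq \alpha(1 - \log(e^\alpha - 1))/(1-\alpha)$. Since $e^\alpha - 1 \geq \alpha$ for $\alpha \geq 0$, the hypothesis $b/\mu \geq \alpha(1-\log\alpha)/(1-\alpha)$ is at least as strong and thus suffices.

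The stated tail bound then follows immediately: on $\{X \leq \alpha^{-1}b - \mu\}$ we have $f(X) \geq \alpha X$ by (ii), whence $\{f(X) \leq \alpha X\}$ is contained (up to a null set) in $\{X > \alpha^{-1}b - \mu\}$, and $\prob(X > \alpha^{-1}b - \mu) = e^{-(\alpha^{-1}b - \mu)/\mu} = e^{1 - \alpha^{-1}b/\mu}$ by the exponential tail. I expect the main obstacle to be the endpoint inequality in (ii); the concavity reduction is routine, but matching exactly the constant $\alpha(1-\log\alpha)/(1-\alpha)$ requires noticing the slack $e^\alpha - 1 \geq \alpha$, without which one would obtain only the weaker-looking $1 - \log(e^\alpha - 1)$ in the numerator.
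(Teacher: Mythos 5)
Your proof is correct and follows essentially the same route as the paper: part (i) is the same direct CDF computation, and part (ii) is reduced to verifying the inequality at the endpoint $x=\alpha^{-1}b-\mu$, which you close with the same elementary bound $e^{\alpha}\geq 1+\alpha$ (in the form $e^{\alpha}-1\geq\alpha$) after discarding the same harmless term (your ``dropping the $-1$'' corresponds to the paper's bounding $1-e^{-b}$ by $1$). The only cosmetic differences are that the paper substitutes $y=e^{-x}$ and compares a linear function with the concave function $y^{\alpha}$, checking both endpoints, whereas you verify $f''<0$ directly and use the decreasing secant slope of a concave function through the origin (so only the right endpoint needs checking), and that the paper normalizes $\mu=1$ by homogeneity while you carry $\mu$ throughout.
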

\begin{proof}
We may for simplicity, by homogeneity, assume that $\mu = 1$. Then $e^{-X}$ is uniformly distributed on $[0,1]$, and thus for $0 \leq t \leq b$,

\begin{align*}
\prob(f(X) \leq t) &= \prob(e^{-b}+(1-e^{-b})e^{-X} \geq e^{-t}) = \prob(e^{-X} \geq \frac{e^{-t}-e^{-b}}{1-e^{-b}})\\
&= \frac{1-e^{-t}}{1-e^{-b}} = \prob(X \leq t | X \leq b),
\end{align*}
which proves (i).

For (ii) we observe that (when $\mu = 1$) $f(x) \geq \alpha x$ if and only if
\begin{equation*}
e^{-b}+(1-e^{-b})e^{-x} \leq e^{-\alpha x}. \tag{2}
\end{equation*}
Letting $y = e^{-x}$, the left hand side of (2) is a linear function of y, while the right hand side $y^{\alpha}$ is concave; hence, in order to verify (2) for the interval $0 \leq x \leq \alpha^{-1}b - 1$, it suffices to verify it for the endpoints.

For $x=0$, equation (2) is a trivial identity, while for $x = \alpha^{-1}b-1$, it is
\begin{equation*}
e^{-b}+(1-e^{-b})e^{-\alpha^{-1}b+1} \leq e^{-b+\alpha}. \tag{3}
\end{equation*}
Now, by assumption, $\alpha^{-1}b = b+b(1-\alpha)\alpha^{-1} \geq b+1-\log{\alpha}$, and thus
$$e^{-b}+e^{-\alpha^{-1}b+1} \leq e^{-b}+e^{-b+\log{\alpha}} = (1+\alpha)e^{-b} \leq e^{\alpha}e^{-b};$$
this implies (3), which completes the proof of the lemma.
\end{proof}

Continuing with the proof of the lower bound of our theorem, let $v_{l_{j}} \in A_j$ be fixed, let $f$ be as in Lemma 3 with $\mu = 1/n_B$ and $b = (1-2\varepsilon)\log{n}/n$, and for $v_{i} \in A_j$ define
\begin{equation*}
U'_{i}= 
   \begin{cases}
   f(U_{i}), &i<l_{j},\\
   U_{i}+b, &i=l_{j},\\
   U_{i}, &i>l_{j}.
   \end{cases}
\end{equation*}
Then, by Lemma 3, for $i<l_{j}$ and the standard lack-of-memory property of exponential distributions for $i=l_{j}$, the distribution of $U'_{i}$ equals the conditional distribution of $U_{i}$ given $\mathcal{E}^{(j)}_{l_{j}}$ for every $v_{i} \in A_j$; moreover, by our independence assumptions, this extends to the joint distribution. Furthermore, by the same lack-of-memory property, the family of random variables $\{ T_{im}-U_{i} \}_{m \in B}$ is independent of $U_{i}$, for each $v_{i} \in A_j$ separately, and thus for all $v_{i} \in A_j$ jointly too; hence the joint distribution of $\{ T_{im}-U_{i} \}_{v_{i} \in A_j, v_{m} \in B}$ is not affected by conditioning on $\mathcal{E}^{(j)}_{l_{j}}$. It follows that if we define $T'_{im}$ for $v_{i},v_{m} \in V, i < m$ by
\begin{equation*}
T'_{im}= 
   \begin{cases}
   T_{im}-U_{i}+U'_{i}, &v_{i} \notin B \text{ and } v_{m} \in B,\\
   T_{im}, &\text{otherwise},
   \end{cases} \tag{4}
\end{equation*}
and let $T'_{mi} = T'_{im}$ for $m>i$, then the family $\{ T'_{im} \}$ has the same distribution as the conditional distribution of $\{ T_{im} \}$ given $\bigcap_{1 \leq j \leq k}\mathcal{E}^{(j)}_{l_{j}}$, ($i \leq kn_A$). Note in particular that for $1 \leq j \leq k$, $T'_{{l_{j}}m} = T_{{l_{j}}m}+b$ when $v_{m} \in B$.

Suppose that for $1 \leq j \leq k$, $\{ T_{im} \}$ are such that
\begin{align*}
U'_{i} &\geq (1 - 2\varepsilon)U_{i} \text{ for every } i \in A,\tag{5} \\
T_{i{l_{j}}} &\geq (2k-1)\frac{\log{n}}{n} \text{ for every } i \in A \tag{6}
\end{align*}
and
\begin{equation*}
w(S) > (k-1-\varepsilon)\log{n}/n \text{ where } S = \{v_{l_{j}} : 1 \leq j \leq k\} \tag{7}
\end{equation*}

We observe first that, by (4) and (5), then
\begin{equation*}
T'_{im} \geq (1 - 2\varepsilon)T_{im} \text{ for every } i \text{ and } j \neq i. \tag{8}
\end{equation*}

Now consider $w'(S)$ which we define as the minimum weight steiner tree connecting the set S defined by the edge weights $T'_{im}$. By (7), the minimum weight steiner tree connecting $S$ has weight $w(S) \geq (k-1-\varepsilon)\log{n}/n$. Consider such a tree, and the corresponding weight $w'(S)$. Either there is a leaf of the minimum weight tree sharing an edge with a vertex outside $B$, and then, by (4) and (6), $w'(S) \geq (2k - 1)\log{n}/n$ or all leaves share edges with vertices inside $B$ which, together with (8) yields

\begin{align*}
w'(S) &\geq kb + (1-2\varepsilon)w(S)\\
&\geq k(1-2\varepsilon)\frac{\log{n}}{n}+(1-2\varepsilon)(k-1-\varepsilon)\frac{\log{n}}{n}\\
&\geq (2k-1-(4k-1)\varepsilon)\frac{\log{n}}{n}.
\end{align*}

We have shown that if (5)-(7) hold, then $w'(S) \geq (2k-1-(4k-1)\varepsilon)\log{n}/n$. Consequently,
\begin{align*}
&\prob(w(S) \geq (2k-1-(4k-1)\varepsilon)\log{n}/n | \bigcap \mathcal{E}^{(j)}_l) \\
&= \prob(w'(S) \geq (2k-1-(4k-1)\varepsilon)\log{n}/n)\\
&\geq \prob(\text{(5)-(7) hold}).
\end{align*}

Let q denote the probability that (5)-(7) hold. We have so far kept $S = \{v_{l_{j} : 1 \leq j \leq k}\}$ fixed, but q is independent of $S$, and summing over the choices for S we obtain
\begin{align*}
&\text{   }\prob(\max_{\substack{S \subset V \\ |S| = k}} w(S) \geq (2k-1-(4k-1)\varepsilon)\log{n}/n)\\
&\geq \sum_{v_{l_{j}} \in A_j ; 1 \leq j \leq k} \prob(w(\{v_{l_{j} : 1 \leq j \leq k}\}) \geq (2k-1-(4k-1)\varepsilon)\log{n}/n | \bigcap \mathcal{E}^{(j)}_{l_j})\prob(\bigcap \mathcal{E}^{(j)}_{l_j})\\
&\geq q\sum_{v_{l_{j}} \in A_j ; 1 \leq j \leq k} \prob(\bigcap \mathcal{E}^{(j)}_{l_j}) = q\mathbb{P}(\bigcap_{j \leq k}\bigcup_{v_{l_{j}} \in A_j} \mathcal{E}^{(j)}_{l_{j}}).  \tag{9}
\end{align*}

Now, by Lemma 3(ii) with $\alpha = 1-2\varepsilon$, if n is large enough,
\begin{align*}
\prob(\text{(5) fails}) &\leq \sum_{i \in A} \prob(U'_{i} < (1-2\varepsilon)U_{i}) \leq kn_{A}e^{1-n_{B}\log{n}/n}\\
&= O(kn^{1-\varepsilon}n^{-1}) = o(1).
\end{align*}
Similarly,
$$\prob(\text{(6) fails}) \leq k\sum_{i \in A} \prob(T_{il_{1}} < (2k-1)\frac{\log{n}}{n}) \leq k(kn_{A}(2k-1)\frac{\log{n}}{n}) = o(1)$$
while $\prob(\text{(7) fails}) = o(1)$ by Theorem 2.

Consequently, $q = 1-o(1)$, which by (9) and (1) yields $\prob(\max_{\substack{S \subset V \\ |S| = k}} w(S)  \geq (2k-1-(4k-1)\varepsilon)\log{n}/n) \rightarrow 1$ as $n \rightarrow \infty$. This completes the proof of the lower bound and Theorem 3. \qed

\subsection{Theorem 4}

The upper bound of the result follows from Theorem 1 (ii) and Theorem 2. Take $S = \{ v_1, v_2, ... ,v_k \}$ to be our set of typical points, (which we can assume is non-empty else we are in the setting of Theorem \ref{thm:main}). We can use Theorem 1 (ii) to deduce that, $\forall \varepsilon > 0$, with probability tending to 1 as $n \to \infty$, \textbf{any} vertex in $V$ can be connected to $v_1$ (say) via a path of length at most $(2+\varepsilon)\log{n}/n$. Theorem 2 implies that with probability tending to 1 as $n \to \infty$, $S$ can be connected by a tree of weight at most $(k-1+\varepsilon)\log{n}/n$. Hence,

$$\prob(W_{k,l} > (k+2l-1+(l+1)\varepsilon)\log{n}/n) \to 0 \text{ as } n \to \infty$$

The lower bound can be proved using the same argument that appears in section 2.2. We simply need to ensure the set of $k$ ``typical vertices'' $\{ v_1, v_2, ..., v_k \}$ lies in $B$ when defining our sets $A_i, 1 \leq i \leq l$.
\\

\textbf{Acknowledgment}
\\

We thank B\'alint T\'oth for stimulating discussions which lead to our interest in this problem.



\bibliography{Maximal_Steiner_Trees}

\begin{thebibliography}{1}

\bibitem{bhamidi2013diameter}
Shankar Bhamidi and Remco van~der Hofstad.
\newblock Diameter of the stochastic mean-field model of distance.
\newblock {\em arXiv preprint arXiv:1306.0208}, 2013.

\bibitem{bhamidi2011first}
Shankar Bhamidi, Remco Van~der Hofstad, and Gerard Hooghiemstra.
\newblock First passage percolation on the {E}rd{\H{o}}s--{R}{\'e}nyi random
  graph.
\newblock {\em Combinatorics, Probability and Computing}, 20(05):683--707,
  2011.

\bibitem{bollobas2004value}
B{\'e}la Bollob{\'a}s, David Gamarnik, Oliver Riordan, and Benny Sudakov†.
\newblock On the value of a random minimum weight {S}teiner tree.
\newblock {\em Combinatorica}, 24(2):187--207, 2004.

\bibitem{frieze2004random}
Alan Frieze.
\newblock On random symmetric travelling salesman problems.
\newblock {\em Mathematics of Operations Research}, 29(4):878--890, 2004.

\bibitem{frieze1985value}
Alan~M Frieze.
\newblock On the value of a random minimum spanning tree problem.
\newblock {\em Discrete Applied Mathematics}, 10(1):47--56, 1985.

\bibitem{janson1999one}
Svante Janson.
\newblock One, two and three times log n/n for paths in a complete graph with
  random weights.
\newblock {\em Combinatorics, Probability and Computing}, 8(04):347--361, 1999.

\bibitem{van2006size}
Remco Van Der~Hofstad, Gerard Hooghiemstra, and Piet Van~Mieghem.
\newblock Size and weight of shortest path trees with exponential link weights.
\newblock {\em Combinatorics, Probability and Computing}, 15(06):903--926,
  2006.

\bibitem{wastlund2010mean}
Johan W{\"a}stlund.
\newblock The mean field traveling salesman and related problems.
\newblock {\em Acta mathematica}, 204(1):91--150, 2010.

\end{thebibliography}
\bibliographystyle{plain}

\end{document}